\theoremstyle{plain}
\newtheorem{theorem}{Theorem}
\newtheorem{corollary}[theorem]{Corollary}
\newtheorem{lemma}[theorem]{Lemma}
\newtheorem{proposition}[theorem]{Proposition}
\theoremstyle{remark}
\newtheorem{remark}{Remark}
\newtheorem{example}{Example}
\begin{document}


\title[Maximal surfaces in a homogeneous Lorentzian 3-manifold]{Maximal spacelike surfaces in a certain
homogeneous Lorentzian 3-manifold}
\author{Sungwook Lee}
\address{Department of Mathematics\\
University of Southern Mississippi\\
Hattiesburg, MS 39406-5045, U.S.A.}
\email{sunglee@usm.edu}

\subjclass[2000]{53A10, 53C30, 53C42, 53C50}
\keywords{Anti-de Sitter space, harmonic maps, homogeneous manifold, Lorentzian manifold, maximal surfaces, Minkowski space, spacelike surfaces, spacetime, solvable Lie groups}

\begin{abstract}
 The 2-parameter family of certain homogeneous\\ Lorentzian 3-manifolds which includes Minkowski 3-space and anti-de Sitter 3-space is considered. Each homogeneous Lorentzian 3-manifold in the 2-parameter family has a solvable Lie group structure with left invariant metric. A generalized integral representation formula which is the unification of representation formulas for maximal spacelike surfaces in those homogeneous Lorentzian 3-manifolds is obtained.  The normal Gau{\ss} map of maximal spacelike surfaces in those homogeneous Lorentzian 3-manifolds and its harmonicity are discussed.
\end{abstract}

\maketitle

\section*{Introduction}
In \cite{InoguchiSol}-\cite{InoguchiSol2}, J.~Inoguchi studied
Weierstra{\ss}-Enneper formula for minimal surfaces in the
$2$-parameter family of Riemannian homogeneous spaces $({\mathbb
R}^3,g[\mu_1,\mu_2])$ with
$$g[\mu_1,\mu_2]=e^{-\mu_1t}dx^2+e^{-\mu_2t}dy^2+dt^2.$$
Here, $\mu_1,\mu_2$ are real constants. Every homogeneous Riemannian
manifold in this family can be represented as a solvable matrix Lie
group with left invariant metric. This family of
homogeneous Riemannian manifolds includes Euclidean $3$-space and
hyperbolic $3$-space. Euclidean $3$-space and hyperbolic $3$-space are in fact the only homogeneous Riemannian manifolds in this family that have constant sectional
curvature. The Weierstra{\ss}-Enneper formula obtained by Inoguchi
is a generalized one that includes representation formulas for
minimal surfaces in Euclidean $3$-space, the well-known classical
formula, and for minimal surfaces in hyperbolic $3$-space obtained by M. Kokubu in \cite{Kokubu} and independently by C.~C. G{\'o}es and P.~A.~Q. Sim{\~o}es in \cite{GS}. The generalized Weierstra{\ss}-Enneper formula also contains an integral
representation formula, obtained by Mercuri, Montaldo and Piu \cite{MMP}, for minimal surfaces in the Riemannian direct product ${\mathbb H}^2\times{\mathbb E}^1$ of hyperbolic $2$-space and the real line ${\mathbb E}^1$. Minimal surfaces in ${\mathbb H}^2\times{\mathbb E}^1$ were also studied by B. Nelli
and H. Rosenberg in \cite{NR} and \cite{Rosenberg}. On the other hand, in \cite{Lee}, the author considered the $2$-parameter family of homogeneous Lorentzian 3-manifolds $({\mathbb R}^3(x^0,x^1,x^2),g_{(\mu_1,\mu_2)})$ with Lorentzian metric
$$g_{(\mu_1,\mu_2)}=-(dx^{0})^{2}+ e^{-2\mu_1x^0}(dx^{1})^{2}
+e^{-2\mu_2x^0}(dx^{2})^{2}.$$
Every homogeneous Lorentzian 3-manifold in this family can be represented as a solvable matrix Lie group with left invariant metric. This family of homogeneous Lorentzian 3-manifolds includes Minkowski $3$-space $\mathbb{E}^3_1$, de Sitter $3$-space $\mathbb{S}^3_1(c^2)$ of constant sectional curvature $c^2$, and $\mathbb{S}^2_1(c^2)\times\mathbb{E}^1$, the direct product of de Sitter 2-space $\mathbb{S}^2_1(c^2)$ of constant curvature $c^2$ and the real line $\mathbb{E}^1$. (In the family, only Minkowski 3-space and de Sitter 3-space have constant sectional curvature.) These three spaces may be considered as Lorentzian counterparts of Euclidean 3-space $\mathbb{E}^3$, hyperbolic 3-space $\mathbb{H}^3(-c^2)$, and the direct product $\mathbb{H}^2(-c^2)\times\mathbb{E}^1$, respectively, of Thurston's eight model geometries \cite{Thurston}. In \cite{Lee}, the author obtained a generalized integral representation formula that includes Weierstra{\ss} representation formula for maximal spacelike surfaces in Minkowski 3-space studied independently by O. Kobayashi \cite{Kobayashi} and by L. McNertney \cite{Mc}, and Weierstra{\ss} representation formula for maximal spacelike surfaces in de Sitter 3-space.

In this paper, we consider the 2-parameter family of homogeneous Lorentzian 3-manifolds 
$({\mathbb R}^3(x^0,x^1,x^2),g_{(\mu_1,\mu_2)})$ with Lorentzian metric
$$g_{(\mu_1,\mu_2)}=-e^{-2\mu_1x^2}(dx^{0})^{2}+ e^{-2\mu_2x^2}(dx^{1})^{2}
+(dx^{2})^{2}.$$
Every homogeneous Lorentzian manifold in this family can also be represented as a solvable matrix Lie group with left invariant metric. This family of homogeneous Lorentzian 3-manifolds includes Minkowski $3$-space $\mathbb{E}^3_1$, anti-de Sitter $3$-space $\mathbb{H}^3_1(-c^2)$ of constant sectional curvature $-c^2$, $\mathbb{H}^2(-c^2)\times\mathbb{E}^1_1$, the direct product of hyperbolic plane $\mathbb{H}^2(-c^2)$ of constant curvature $-c^2$ and the timeline $\mathbb{E}^1_1$, and $\mathbb{H}^2_1(-c^2)\times\mathbb{E}^1$, the direct product of anti-de Sitter 2-space $\mathbb{H}^2_1(-c^2)$ of constant curvature $-c^2$ and the real line $\mathbb{E}^1$. (In the family, only Minkowski 3-space and anti-de Sitter 3-space have constant sectional curvature.) These four spaces may be considered as Lorentzian counterparts of Euclidean 3-space $\mathbb{E}^3$, 3-sphere $\mathbb{S}^3$, the direct product $\mathbb{H}^2(-c^2)\times\mathbb{E}^1$, and $\mathbb{S}^2\times\mathbb{E}^1$, the direct product of 2-sphere $\mathbb{S}^2$ and the real line $\mathbb{E}^1$, respectively, of Thurston's eight model geometries \cite{Thurston}. We obtain a generalized integral representation formula that includes, in particular, representation formulas for maximal spacelike surfaces in Minkowski 3-space (\cite{Kobayashi}, \cite{Mc}) and in anti-de Sitter 3-space. The normal Gau{\ss} map of maximal spacelike surfaces in $G(\mu_1,\mu_2)$ is discussed. It is shown that Minkowski $3$-space $G(0,0)$, anti-de Sitter $3$-space $G(c,c)$, and $G(c,-c)$ are the only homogeneous Lorentzian $3$-manifolds among the 2-parameter family members $G(\mu_1,\mu_2)$ in which the (projected) normal Gau{\ss} map of maximal spacelike surfaces is harmonic. The harmonic map equations for those cases are also obtained.   
\section{Solvable Lie group}
\label{sec:sol}
In this section, we study the following
two-parameter family of homogeneous
Lorentzian $3$-manifolds;
\begin{equation}\label{1.1}
\left\{(\mathbb{R}^{3}(x^{0},x^{1},x^{2}), g_{(\mu_1,\mu_2)})\
\vert \
(\mu_1,\mu_2)\in \mathbb{R}^{2}
\right \},
\end{equation}
where the metrics $g(\mu_1,\mu_2)$ are
defined by
\begin{equation}
g(\mu_1,\mu_2):=-e^{-2\mu_1x^2}(dx^{0})^{2}+e^{-2\mu_2x^2}(dx^{1})^{2}+(dx^{2})^{2}.
\end{equation}
\begin{proposition}
Each homogeneous space $(\mathbb{R}^3,g_{(\mu_1,\mu_2)})$ is
isometric to the following solvable matrix Lie group:
$$
G(\mu_1,\mu_2)=
\left\{\left(
\begin{array}{cccc}
e^{\mu_1x^2} & 0 & 0 & x^{0}\\
0 & e^{\mu_2x^2} & 0 & x^{1}\\
0 & 0 & 1 & x^{2}\\
0 & 0 & 0 & 1
\end{array}
\right)
\
\Biggr
\vert
\
x^0, x^1,x^2
\in \mathbb{R}
\right
\}
$$
with left invariant metric. The group operation on $G(\mu_1,\mu_2)$ is the ordinary matrix multiplication and the corresponding group operation on $(\mathbb{R}^3,g_{(\mu_1,\mu_2)})$ is given by
$$(x^0,x^1,x^2)\cdot(\tilde{x}^0,\tilde{x}^1,\tilde{x}^2)=(x^0+e^{\mu_1x^2}\tilde{x}^0,x^1+e^{\mu_2x^2}\tilde{x}^1,x^2+\tilde{x}^2).$$
\end{proposition}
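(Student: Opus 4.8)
The plan is to produce the obvious candidate diffeomorphism
$\phi\colon\mathbb{R}^3\to G(\mu_1,\mu_2)$ sending $(x^0,x^1,x^2)$ to the displayed $4\times4$ matrix, and then to verify three things: that $\phi$ conjugates the additive coordinate space (equipped with the asserted operation) onto $G(\mu_1,\mu_2)$ with matrix multiplication; that $G(\mu_1,\mu_2)$ is a solvable Lie group; and that transporting $g_{(\mu_1,\mu_2)}$ along $\phi$ yields a left invariant Lorentzian metric, so that $\phi$ becomes an isometry by construction.

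First I would multiply two matrices of $G(\mu_1,\mu_2)$. The block-diagonal-plus-last-column form makes this immediate: the $(i,i)$ entries multiply to $e^{\mu_i(x^2+\tilde x^2)}$ and the last column becomes $(x^0+e^{\mu_1x^2}\tilde x^0,\ x^1+e^{\mu_2x^2}\tilde x^1,\ x^2+\tilde x^2)$, which is exactly the stated operation. Hence $G(\mu_1,\mu_2)$ is closed under multiplication and under inversion (the inverse of $(x^0,x^1,x^2)$ being $(-e^{-\mu_1x^2}x^0,\ -e^{-\mu_2x^2}x^1,\ -x^2)$), so it is a closed subgroup of $GL(4,\mathbb{R})$ and therefore a Lie group, and $\phi$ is a group isomorphism which is a diffeomorphism. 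Solvability follows because $(x^0,x^1,x^2)\mapsto x^2$ is a homomorphism onto $(\mathbb{R},+)$ whose kernel $\{(x^0,x^1,0)\}\cong\mathbb{R}^2$ is abelian; thus $G(\mu_1,\mu_2)\cong\mathbb{R}^2\rtimes\mathbb{R}$ and the derived series terminates after two steps.

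Next I would check left invariance of the metric. For fixed $a=(a^0,a^1,a^2)$ the left translation reads $L_a(x^0,x^1,x^2)=(a^0+e^{\mu_1a^2}x^0,\ a^1+e^{\mu_2a^2}x^1,\ a^2+x^2)$, so with $y=L_a(x)$ one has $dy^0=e^{\mu_1a^2}dx^0$, $dy^1=e^{\mu_2a^2}dx^1$, $dy^2=dx^2$, and $e^{-2\mu_iy^2}=e^{-2\mu_ia^2}e^{-2\mu_ix^2}$. Substituting into $g_{(\mu_1,\mu_2)}$, the factors $e^{\pm2\mu_ia^2}$ cancel termwise, giving $L_a^{\ast}g_{(\mu_1,\mu_2)}=g_{(\mu_1,\mu_2)}$. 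Equivalently, the coframe $\{e^{-\mu_1x^2}dx^0,\ e^{-\mu_2x^2}dx^1,\ dx^2\}$ is left invariant and $g_{(\mu_1,\mu_2)}$ is the constant-coefficient combination $-(\cdot)^2+(\cdot)^2+(\cdot)^2$ of it, hence left invariant. Pushing this metric forward by the group isomorphism $\phi$ gives a left invariant metric on $G(\mu_1,\mu_2)$ with respect to which $\phi$ is tautologically an isometry.

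There is no genuine obstacle here; the only points demanding care are bookkeeping of the exponential weights in the matrix product and in the pullback, and the observation that ``left invariant'' transports correctly under $\phi$ precisely because $\phi$ is a group isomorphism and not merely a diffeomorphism. If one prefers an intrinsic formulation, one can instead read off a left invariant orthonormal frame dual to the coframe above and note that its structure constants are $(\mu_1,\mu_2)$-dependent but constant, again exhibiting $G(\mu_1,\mu_2)$ as a solvable Lie group with left invariant metric isometric to $(\mathbb{R}^3,g_{(\mu_1,\mu_2)})$.
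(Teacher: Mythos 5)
Your proposal is correct and its core step --- writing out $L_a(x^0,x^1,x^2)=(a^0+e^{\mu_1a^2}x^0,\,a^1+e^{\mu_2a^2}x^1,\,a^2+x^2)$ and checking that the exponential weights cancel so that $L_a^{\ast}g_{(\mu_1,\mu_2)}=g_{(\mu_1,\mu_2)}$ --- is exactly the paper's proof. The extra verifications you include (that matrix multiplication reproduces the stated coordinate operation, and solvability via the semidirect-product/quotient argument, which the paper instead deduces afterwards from the Lie algebra commutation relations) are sound and merely make explicit what the paper treats as immediate or handles outside the proposition's proof.
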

\begin{proof}
For $\tilde{a}=(a^0,a^1,a^2)\in G(\mu_1,\mu_2)$, denote by $L_{\tilde{a}}$ the left translation by $\tilde{a}$. Then
\begin{align*}
 L_{\tilde{a}}(x^0,x^1,x^2)&=(a^0,a^1,a^2)\cdot(x^0,x^1,x^2)\\
 &=(a^0+e^{\mu_1a^2}x^0,a^1+e^{\mu_2a^2}x^1,a^2+x^2)
\end{align*}
and
\begin{align*}
 L_{\tilde{a}}^{\ast}g_{(\mu_1,\mu_2)}&=-e^{-2\mu_1(a^2+x^2)}\{d(a^0+e^{\mu_1a^2}x^0)\}^2+\\
 &e^{-2\mu_2(a^2+x^2)}\{d(a^1+e^{\mu_2a^2}x^1)\}^2+\{d(a^2+x^2)\}^2\\
 &=-e^{-2\mu_1x^2}(dx^0)^2+e^{-2\mu_2x^2}(dx^1)^2+(dx^2)^2.
\end{align*}
This completes the proof.
\end{proof}
The Lie algebra $\mathfrak{g}(\mu_1,\mu_2)$ is
given explicitly by
\begin{equation}
\mathfrak{g}(\mu_1,\mu_2)=
\left\{\left(
\begin{array}{cccc}
\mu_1y^2 & 0 & 0 & y^{0}\\
0 & \mu_2y^2 & 0 & y^{1}\\
0 & 0 & 0 & y^{2}\\
0 & 0 & 0 & 0
\end{array}
\right)
\
\Biggr
\vert
\
y^0,y^1,y^2
\in \mathbb{R}
\right
\}.
\end{equation}
Then we can take the following
orthonormal basis $\{E_0,E_1,E_2\}$
of $\mathfrak{g}(\mu_1,\mu_2)$:
\begin{equation}
\label{eq:basis}
E_{0}= \left(
\begin{array}{cccc}
0 & 0 & 0 & 1\\
0 & 0 & 0 & 0\\
0 & 0 & 0 & 0\\
0 & 0 & 0 & 0
\end{array}
\right), E_{1}= \left(
\begin{array}{cccc}
0 & 0 & 0 & 0\\
0 & 0 & 0 & 1\\
0 & 0 & 0 & 0\\
0 & 0 & 0 & 0
\end{array}
\right),
E_{2}=
\left(
\begin{array}{cccc}
\mu_1 & 0 & 0 & 0\\
0 & \mu_2 & 0 & 0\\
0 & 0 & 0 & 1\\
0 & 0 & 0 & 0
\end{array}
\right).
\end{equation}
Then the commutation relation of $\mathfrak{g}(\mu_1,\mu_2)$ is
given by
\begin{align*}
[E_0,E_1]&=0,\ [E_1,E_2]=-\mu_2E_1,\\
[E_2,E_0]&=\mu_{1}E_{0}.
\end{align*}
$[[\mathfrak{g},\mathfrak{g}],[\mathfrak{g},\mathfrak{g}]]=0$, so $\mathfrak{g}(\mu_1,\mu_2)$ is a solvable Lie algebra i.e. $G(\mu_1,\mu_2)$ is a solvable Lie group. For $X\in{\mathfrak g}(\mu_1,\mu_2)$, denote by ${\rm ad}(X)^*$ the
\emph{adjoint} operator of ${\rm ad}(X)$. Then it satisfies
the equation
$$\langle[X,Y],Z\rangle=\langle Y,{\rm ad}(X)^*(Z)\rangle$$
for any $Y,Z\in{\mathfrak g}(\mu_1,\mu_2)$. Let $U$ be the symmetric
bilinear operator on ${\mathfrak g}(\mu_1,\mu_2)$ defined by
$$U(X,Y):=\frac{1}{2}\{{\rm ad}(X)^*(Y)+{\rm ad}(Y)^*(X)\}.$$
\begin{lemma}
Let $\{E_0,E_1,E_2\}$ be the orthonormal basis for ${\mathfrak
g}(\mu_1,\mu_2)$ defined in \eqref{eq:basis}. Then
\begin{align*}
U(E_0,E_0)&=\mu_1E_2,\ U(E_1,E_1)=-\mu_2E_2,\ U(E_2,E_2)=0,\\
U(E_0,E_1)&=0,\ U(E_1,E_2)=\frac{\mu_2}{2}E_1,\
U(E_2,E_0)=\frac{\mu_1}{2}E_0.
\end{align*}
\end{lemma}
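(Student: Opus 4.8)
The plan is to compute each $\mathrm{ad}(E_i)^*(E_j)$ directly from the definition of the adjoint operator and then symmetrize. The first step is to record the ambient signs: since $g_{(\mu_1,\mu_2)}$ has Lorentzian signature $(-,+,+)$ and $\{E_0,E_1,E_2\}$ is orthonormal, we have $\langle E_0,E_0\rangle=-1$, $\langle E_1,E_1\rangle=\langle E_2,E_2\rangle=1$ and $\langle E_i,E_j\rangle=0$ for $i\neq j$. Writing $\varepsilon_0=-1$, $\varepsilon_1=\varepsilon_2=1$, every $V\in\mathfrak{g}(\mu_1,\mu_2)$ expands as $V=\sum_k\varepsilon_k\langle V,E_k\rangle E_k$.

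The second step is to turn the defining relation $\langle[X,Y],Z\rangle=\langle Y,\mathrm{ad}(X)^*(Z)\rangle$ into an explicit formula. Taking $Y=E_k$ gives $\langle E_k,\mathrm{ad}(X)^*(Z)\rangle=\langle[X,E_k],Z\rangle$, so
$$\mathrm{ad}(X)^*(Z)=\sum_{k=0}^{2}\varepsilon_k\,\langle[X,E_k],Z\rangle\,E_k.$$
Thus everything reduces to the brackets $[E_i,E_k]$, which are read off from the commutation relations already established, together with the consequences $[E_2,E_1]=\mu_2E_1$, $[E_0,E_2]=-\mu_1E_0$ and $[E_i,E_i]=0$.

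The third step is the case-by-case evaluation. For the diagonal terms $U(E_i,E_i)=\mathrm{ad}(E_i)^*(E_i)=\sum_k\varepsilon_k\langle[E_i,E_k],E_i\rangle E_k$: for $i=0$ only $k=2$ contributes, giving $\varepsilon_2\langle[E_0,E_2],E_0\rangle E_2=\langle-\mu_1E_0,E_0\rangle E_2=\mu_1E_2$ (here the timelike sign $\langle E_0,E_0\rangle=-1$ is exactly what restores the $+$); for $i=1$ only $k=2$ contributes, giving $-\mu_2E_2$; for $i=2$ no term survives, giving $0$. For the off-diagonal terms one computes $\mathrm{ad}(E_i)^*(E_j)$ and $\mathrm{ad}(E_j)^*(E_i)$ separately; in each case at most one bracket is nonzero and orthogonality kills one of the two contributions, leaving $U(E_0,E_1)=0$, $U(E_1,E_2)=\frac12\mathrm{ad}(E_2)^*(E_1)=\frac{\mu_2}{2}E_1$ and $U(E_2,E_0)=\frac12\mathrm{ad}(E_2)^*(E_0)=\frac{\mu_1}{2}E_0$.

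There is no real obstacle here; the computation is routine linear algebra in a three-dimensional Lie algebra. The only point that needs care is the bookkeeping of the Lorentzian signs $\varepsilon_k$ — in particular that the $E_0$-component of a vector $V$ is $-\langle V,E_0\rangle$, not $\langle V,E_0\rangle$ — since a slip there would corrupt precisely the entries $U(E_0,E_0)$ and $U(E_2,E_0)$ in which the timelike direction $E_0$ occurs.
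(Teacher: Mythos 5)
Your computation is correct: the sign conventions $\varepsilon_0=-1$, $\varepsilon_1=\varepsilon_2=1$, the formula $\mathrm{ad}(X)^*(Z)=\sum_k\varepsilon_k\langle[X,E_k],Z\rangle E_k$, and the case-by-case evaluation all check out against the commutation relations $[E_0,E_1]=0$, $[E_1,E_2]=-\mu_2E_1$, $[E_2,E_0]=\mu_1E_0$, reproducing every entry of the lemma. The paper omits this verification as routine, and your argument is exactly the direct computation it implicitly relies on, with the Lorentzian sign bookkeeping handled correctly.
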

\begin{lemma}[M.~Kokubu \cite{Kokubu}, K.~Uhlenbeck
\cite{Uhlenbeck}]

Let ${\mathfrak D}$ be a simply connected domain. A smooth map
$\varphi: {\mathfrak D}\longrightarrow G(\mu_1,\mu_2)$ is harmonic
if and only if
\begin{equation}
\label{eq:harm}
(\varphi^{-1}\varphi_u)_u+(\varphi^{-1}\varphi_v)_v-{\rm
  ad}(\varphi^{-1}\varphi_u)^*(\varphi^{-1}\varphi_u)-{\rm
  ad}(\varphi^{-1}\varphi_v)^*(\varphi^{-1}\varphi_v)=0
\end{equation}
holds.
\end{lemma}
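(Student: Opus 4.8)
The plan is to reduce the vanishing of the tension field $\tau(\varphi)=\operatorname{trace}\nabla d\varphi$ to an equation in the Lie algebra $\mathfrak g(\mu_1,\mu_2)$ by trivializing $TG(\mu_1,\mu_2)$ through left translations. Since the harmonicity of a map out of a surface depends only on the conformal class of the domain metric, I may take $\mathfrak D$ with coordinate $z=u+iv$ to carry the flat metric $du^2+dv^2$, so that $\tau(\varphi)=\nabla^{\varphi}_{\partial_u}\varphi_u+\nabla^{\varphi}_{\partial_v}\varphi_v$, where $\nabla^{\varphi}$ is the connection on the pullback bundle $\varphi^{\ast}TG(\mu_1,\mu_2)$ induced by the Levi-Civita connection of the left invariant metric. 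Writing $A:=\varphi^{-1}d\varphi$ for the pullback of the Maurer--Cartan form --- a $\mathfrak g(\mu_1,\mu_2)$-valued $1$-form on $\mathfrak D$ with components $A_u=\varphi^{-1}\varphi_u$ and $A_v=\varphi^{-1}\varphi_v$ --- I have $\varphi_u=(dL_\varphi)A_u$ and $\varphi_v=(dL_\varphi)A_v$.

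The key ingredient is the Koszul formula for the Levi-Civita connection $\nabla$ of the left invariant metric: for $X,Y\in\mathfrak g(\mu_1,\mu_2)$ regarded as left invariant vector fields, the terms $X\langle Y,Z\rangle$ etc.\ vanish (the metric being constant on left invariant fields), and one is left with
\begin{equation*}
2\langle\nabla_X Y,Z\rangle=\langle[X,Y],Z\rangle-\langle[X,Z],Y\rangle-\langle[Y,Z],X\rangle,
\end{equation*}
which by the defining property of $\operatorname{ad}(\cdot)^{\ast}$ gives $\nabla_X Y=\tfrac12[X,Y]-U(X,Y)$; this identity is valid for any non-degenerate left invariant metric, so the indefinite (Lorentzian) signature causes no additional difficulty. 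It follows that for a vector field $t\mapsto(dL_{\varphi(t)})\xi(t)$ along a curve $\varphi$ with $\varphi'=(dL_\varphi)\eta$, the covariant derivative equals $(dL_\varphi)\bigl(\dot\xi+\tfrac12[\eta,\xi]-U(\eta,\xi)\bigr)$, since the connection contribution depends only on the pointwise values of $\eta$ and $\xi$.

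Applying this with $\eta=\xi=A_u$ and then $\eta=\xi=A_v$, using $[A_u,A_u]=[A_v,A_v]=0$ and $U(X,X)=\operatorname{ad}(X)^{\ast}X$, I obtain
\begin{equation*}
\tau(\varphi)=(dL_\varphi)\bigl((A_u)_u+(A_v)_v-\operatorname{ad}(A_u)^{\ast}A_u-\operatorname{ad}(A_v)^{\ast}A_v\bigr).
\end{equation*}
As $dL_\varphi$ is a linear isomorphism, $\tau(\varphi)=0$ precisely when the $\mathfrak g(\mu_1,\mu_2)$-valued quantity in parentheses vanishes, which is exactly \eqref{eq:harm}. (Alternatively, one can differentiate the energy $E(\varphi)=\tfrac12\int_{\mathfrak D}(\langle A_u,A_u\rangle+\langle A_v,A_v\rangle)\,du\,dv$ along a compactly supported variation $\varphi_s$ with $\partial_s\varphi|_{s=0}=(dL_\varphi)\psi$, use the Maurer--Cartan equation $dA+\tfrac12[A\wedge A]=0$, and integrate by parts to get $\delta E=-\int_{\mathfrak D}\langle\tau(\varphi),\psi\rangle\,du\,dv$ with the same $\tau(\varphi)$.) The step that needs care is the bookkeeping in the Koszul formula: keeping track of the signs and the factors of $\tfrac12$, and noticing that the skew bracket terms drop out exactly because $A_u$ and $A_v$ are each self-bracketed to zero, so that only the symmetric operator $U$ --- and hence only $\operatorname{ad}(\cdot)^{\ast}$ applied to itself --- survives in the final equation.
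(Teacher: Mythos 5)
Your proof is correct. Note that the paper itself does not prove this lemma at all --- it is quoted with attribution to Kokubu and Uhlenbeck --- so there is no in-paper argument to compare against; what you have written is essentially the standard derivation from those sources, carried out in full. The two key points are both handled properly: (i) the Koszul formula for a left invariant (merely non-degenerate, so Lorentzian is fine) metric gives $\nabla_XY=\tfrac12[X,Y]-U(X,Y)$ on left invariant fields, with $U$ exactly the paper's symmetric operator built from $\operatorname{ad}(\cdot)^{\ast}$; and (ii) pulling back along $\varphi$ and using tensoriality in the direction slot yields $\nabla^{\varphi}_{\partial_u}\varphi_u=(dL_\varphi)\bigl((A_u)_u-U(A_u,A_u)\bigr)$, the bracket terms vanishing since each field is paired with itself, so $\tau(\varphi)$ is the left translate of the left-hand side of \eqref{eq:harm}. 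The appeal to conformal invariance of harmonicity for two-dimensional domains is the right justification for working with the flat metric $du^2+dv^2$. As a sanity check in your favor, your formula $\nabla_XY=\tfrac12[X,Y]-U(X,Y)$ reproduces the metrically forced values $\nabla_{e_2}e_0=\nabla_{e_2}e_1=0$ (required by torsion-freeness together with $\nabla_{e_0}e_2=-\mu_1e_0$, $\nabla_{e_1}e_2=-\mu_2e_1$ and the stated brackets), so your Koszul bookkeeping is consistent; the corresponding entries printed in the paper's connection table appear to contain a sign slip, which does not affect the lemma. The parenthetical variational argument via the first variation of the energy and the Maurer--Cartan equation is also a legitimate alternative route to the same equation.
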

Let $z=u+iv$. Then in terms of complex coordinates $z, \bar z$, the
harmonic map equation \eqref{eq:harm} can be written as
\begin{equation}
\label{eq:harm2}
\frac{\partial}{\partial\bar
  z}\left(\varphi^{-1}\frac{\partial\varphi}{\partial
  z}\right)+\frac{\partial}{\partial
  z}\left(\varphi^{-1}\frac{\partial\varphi}{\partial\bar
  z}\right)-2U\left(\varphi^{-1}\frac{\partial\varphi}{\partial
  z},\varphi^{-1}\frac{\partial\varphi}{\partial\bar z}\right)=0.
\end{equation}
Let $\varphi^{-1}d\varphi=Adz+\bar Ad\bar z$. Then the equation
\eqref{eq:harm2} is equivalent to
\begin{equation}
\label{eq:harm3}
A_{\bar z}+\bar A_z=2U(A,\bar A).
\end{equation}
The Maurer-Cartan equation is given by
\begin{equation}
\label{eq:m-c}
A_{\bar z}-\bar A_z=[A,\bar A].
\end{equation}
The equations \eqref{eq:harm3} and \eqref{eq:m-c} can be combined to a single
equation
\begin{equation}
\label{eq:int}
A_{\bar z}=U(A,\bar A)+\frac{1}{2}[A,\bar A].
\end{equation}
The equation \eqref{eq:int} is both the integrability condition for the
differential equation $\varphi^{-1}d\varphi=Adz+\bar Ad\bar z$ and the
condition for $\varphi$ to be a harmonic map.

Left-translating the basis
$\{E_0,E_1,E_2\}$, we obtain the following
orthonormal frame field:
$$
e_{0}=e^{\mu_{1}x^2}\frac{\partial}{\partial x^{0}},\
e_{1}=e^{\mu_{2}x^2}\frac{\partial}{\partial x^{1}},\
e_{2}=\frac{\partial}{\partial x^{2}}.
$$
The Levi-Civita connection $\nabla$ of
$G(\mu_1,\mu_2)$ is computed to be
\begin{align*}
\nabla_{e_0}e_{0}&=-\mu_1e_2,\ \nabla_{e_0}e_{1}=0,\ \nabla_{e_0}e_{2}=-\mu_1e_0,\\
\nabla_{e_1}e_{0}&=0,\ \nabla_{e_1}e_{1}=\mu_2e_2,\ \nabla_{e_1}e_{2}=-\mu_2e_1,\\
\nabla_{e_2}e_{0}&=-\mu_1e_0,\ \nabla_{e_2}e_{1}=-\mu_2e_1,\
\nabla_{e_2}e_{2}=0.
\end{align*}

Let $K(e_i,e_j)$ denote the sectional curvature of $G(\mu_1,\mu_2)$
with respect to the tangent plane spanned by $e_i$ and $e_j$ for
$i,j=0,1,2$. Then
\begin{equation}
\label{eq:curv}
\begin{aligned}
K(e_0,e_1)&=g^{00}R^1_{010}=-\mu_1\mu_2,\\
K(e_1,e_2)&=g^{11}R^2_{121}=-\mu_2^2,\\
K(e_0,e_3)&=g^{00}R^3_{030}=-\mu_1^2,
\end{aligned}
\end{equation}
where $g_{ij}=g_{(\mu_1,\mu_2)}(e_i,e_j)$ denotes the metric tensor
of $G(\mu_1,\mu_2)$. Hence, we see that $G(\mu_1,\mu_2)$ has
a constant sectional curvature if and only if
$\mu_1^2=\mu_2^2=\mu_1\mu_2$. If $c:=\mu_1=\mu_2$, then
$G(\mu_1,\mu_2)$ is locally isometric to ${\mathbb H}^3_1(-c^2),$ the
anti-de Sitter $3$-space of constant sectional curvature $-c^2$. (See
Example \ref{ex:ads} and Remark \ref{rem:ads}.) If
$G(\mu_1,\mu_2)$ has a constant sectional curvature and $\mu_1=-\mu_2$, then $\mu_1=\mu_2=0$, so
$G(\mu_1,\mu_2)=G(0,0)\cong{\mathbb E}^3_1$ (Example
\ref{ex:minkowski}).
\begin{example}(Minkowski $3$-space)
\label{ex:minkowski}
The Lie group $G(0,0)$ is isomorphic and
isometric to the Minkowski $3$-space
$$
\mathbb{E}^3_{1}=(\mathbb{R}^3(x^0,x^1,x^2),+)
$$
with the metric $-(dx^0)^2+(dx^1)^2+(dx^2)^2$.
\end{example}
\begin{example}(Anti-de Sitter $3$-space)
\label{ex:ads}
Take $\mu_1=\mu_2=c \not=0$. Then $G(c,c)$ is
the flat chart model of the anti-de Sitter $3$-space:
$$
\mathbb{H}^{3}_{1}(-c^2)_{+}=(\mathbb{R}^3(x^0,x^1,x^2),
e^{-2cx^2}\{-(dx^0)^{2}+(dx^{1})^{2}\}+(dx^{2})^{2}).
$$
\end{example}
\begin{remark}
\label{rem:ads} Let $\mathbb{E}^{4}_2$ be the pseudo-Euclidean
$4$-space with the metric $\langle \cdot,\cdot
\rangle$:
$$
\langle \cdot,\cdot\rangle=-(du^{0})^{2}-(du^{1})^{2}+
(du^2)^{2}+(du^3)^2.
$$
in terms of rectangular coordinate system
$(u^0,u^1,u^2,u^3)$. The \emph{anti-de Sitter $3$-space} $\mathbb{H}^3_1(-c^2)$ of constant
sectional curvature $-c^2$ is realized as the hyperquadric in
$\mathbb{E}^{4}_{2}$:
$$
\mathbb{H}^3_1(-c^2)= \left\{(u^0,u^1,u^2,u^3)\in\mathbb{E}^{4}_2: \
-(u^{0})^{2}-(u^{1})^{2}+(u^2)^2+(u^3)^2=-\frac{1}{c^2}\right\}.
$$
The anti-de Sitter $3$-space $\mathbb{H}^{3}_1(-c^2)$ is divided into the
following three regions:
\begin{align*}
\mathbb{H}^{3}_{1}(-c^2)_{+}&= \{(u^0,u^1,u^2,u^3)\in
\mathbb{H}^{3}_1(-c^2): \ c(u^{1}+u^{2})>0\};\\
\mathbb{H}^{3}_{1}(-c^2)_{0}&=
\{(u^0,u^1,u^2,u^3)\in\mathbb{H}^{3}_1(-c^2): \ u^{1}+u^{2}=0\};\\
\mathbb{H}^{3}_{1}(-c^2)_{-}&=
\{(u^0,u^1,u^2,u^3)\in\mathbb{H}^{3}_1(-c^2): \ c(u^{1}+u^{2})<0\}.
\end{align*}
$\mathbb{H}^3_1(-c^2)$ is the disjoint union $\mathbb{H}^3_1(-c^2)_{+}\dotplus\mathbb{H}^3_1(-c^2)_{0}\dotplus
\mathbb{H}^3_1(c^2)_{-}$ and $\mathbb{H}^{3}_{1}(-c^2)_{\pm}$ are
diffeomorphic to $(\mathbb{R}^3,g_{(c,c)})$. Let us introduce a local coordinate system $(x^0,x^1,x^2)$ on $\mathbb{H}^3_1(-c^2)_{+}$ by
\begin{align*}
x^0&=\frac{u^0}{c(u^1+u^2)},\\
x^1&=\frac{u^3}{c(u^1+u^2)},\\
x^2&=-\frac{1}{c}\ln[c(u^1+u^2)].
\end{align*}
The induced metric of $\mathbb{H}^3_1(-c^2)_+$ is expressed as:
$$
g_c:=e^{-2cx^2}\{-(dx^0)^2+(dx^1)^2\}+(dx^2)^2.
$$
The chart $(\mathbb{H}^3_1(-c^2)_{+},g_c)$ is called the \emph{flat chart} of $\mathbb{H}^3_1(-c^2)$. The flat chart is identified with the Lorentzian manifold
$(\mathbb{R}^{3},g_{(c,c)})$
of constant sectional curvature $-c^2$. This expression shows that
the flat chart is a warped product $\mathbb{E}^{1}\times_{f}
\mathbb{E}^{2}_{1}$ with warping function $f(x^2)=e^{-cx^2}$. Introducing $y^0=cx^0$, $y^1=cx^1$, and $y^2=e^{cx^2}$, we also obtain half-space model of anti-de Sitter 3-space $\mathbb{H}^3_1(-c^2)$ with an analogue of Poincar\'{e} metric
$$g_c:=\frac{-(dy^0)^2+(dy^1)^2+(dy^2)^2}{c^2(y^2)^2}.$$
\end{remark}
\begin{example}[Direct Product $\mathbb{H}^2(-c^2)\times\mathbb{E}^1_1$] Take $(\mu_1,\mu_2)=(0,c)$ with $c\ne 0$. Then the resulting homogeneous spacetime is $\mathbb{R}^3$ with the Lorentzian metric
$$-(dx^0)^2+e^{-2cx^2}(dx^1)^2+(dx^2)^2.$$
$G(0,c)$ is identified with $\mathbb{H}^2(-c^2)\times\mathbb{E}^1_1$, the direct product of hyperbolic plane $\mathbb{H}^2(-c^2)$ of constant curvature $-c^2$ and the timeline $\mathbb{E}^1_1$.
\end{example}
\begin{example}[Direct Product $\mathbb{H}^2_1(-c^2)\times\mathbb{E}^1$] Take $(\mu_1,\mu_2)=(c,0)$ with $c\ne 0$. Then the resulting homogeneous spacetime is $\mathbb{R}^3$ with the Lorentzian metric
$$-e^{-2cx^2}(dx^0)^2+(dx^2)^2+(dx^1)^2.$$
$G(c,0)$ is identified with $\mathbb{H}^2_1(-c^2)\times\mathbb{E}^1$, the direct product of anti-de Sitter 2-space $\mathbb{H}^2_1(-c^2)$ of constant curvature $-c^2$ and the real line $\mathbb{E}^1$.
\end{example}
\begin{example}[Homogeneous Spacetime $G(c,-c)$] Let $\mu_1=c$ and $\mu_2=-c$ with $c\ne 0$. Then the resulting homogeneous spacetime $G(c,-c)$ is $\mathbb{R}^3$ with the Lorentzian metric
$$-e^{-2cx^2}(dx^0)^2+e^{2cx^2}(dx^1)^2+(dx^2)^2.$$
\end{example}
\section{Integral representation formula}
In this section, we obtain a general integral representation formula for maximal spacelike surfaces in $G(\mu_1,\mu_2)$ analogously to \cite{InoguchiSol} and \cite{Lee}.

Let $\mathfrak{D}(z,\bar z)$ be a simply connected domain and
$\varphi: \mathfrak{D}\longrightarrow G(\mu_1,\mu_2)$ a smooth map.
If we write $\varphi(z)=(x^0(z),x^1(z),x^2(z))$ then by direct
calculation
$$A=x^0_ze^{-\mu_1x^2}E_0+x_z^1e^{-\mu_2x^2}E_1+x^2_zE_2.$$
It follows from the harmonic map equation \eqref{eq:harm3} that
\begin{lemma}
\label{lem:harm} $\varphi$ is harmonic if and only if the following
equations hold:
\begin{align*}
x^0_{z\bar z}-\mu_1(x^0_{\bar z}x^2_z+x^0_zx^2_{\bar z})&=0,\\
x^1_{z\bar z}-\mu_2(x^1_{\bar z}x^2_z+x^1_zx^2_{\bar z})&=0,\\
x^2_{z\bar z}-\mu_1x^0_zx^0_{\bar z}e^{-2\mu_1x^2}+\mu_2x^1_zx^1_{\bar z}e^{-2\mu_2x^2}&=0.
\end{align*}
\end{lemma}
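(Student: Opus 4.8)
The plan is to substitute the explicit expression $A=x^0_ze^{-\mu_1x^2}E_0+x^1_ze^{-\mu_2x^2}E_1+x^2_zE_2$ into the harmonic map equation \eqref{eq:harm3}, namely $A_{\bar z}+\bar A_z=2U(A,\bar A)$, and to read off the three scalar equations by comparing components in the orthonormal basis $\{E_0,E_1,E_2\}$.

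First I would record $\bar A=x^0_{\bar z}e^{-\mu_1x^2}E_0+x^1_{\bar z}e^{-\mu_2x^2}E_1+x^2_{\bar z}E_2$ and differentiate the coefficient functions, using the chain rule on the exponential factors, e.g. $\partial_{\bar z}(x^0_ze^{-\mu_1x^2})=(x^0_{z\bar z}-\mu_1x^0_zx^2_{\bar z})e^{-\mu_1x^2}$, and likewise for the remaining coefficients of $A_{\bar z}$ and for $\bar A_z$. Adding and using $x^i_{z\bar z}=x^i_{\bar z z}$ gives
$$A_{\bar z}+\bar A_z=\bigl(2x^0_{z\bar z}-\mu_1(x^0_zx^2_{\bar z}+x^0_{\bar z}x^2_z)\bigr)e^{-\mu_1x^2}E_0+\bigl(2x^1_{z\bar z}-\mu_2(x^1_zx^2_{\bar z}+x^1_{\bar z}x^2_z)\bigr)e^{-\mu_2x^2}E_1+2x^2_{z\bar z}E_2.$$
Next I would expand $2U(A,\bar A)$ by bilinearity and symmetry, inserting the values of $U(E_i,E_j)$ from the preceding Lemma. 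Only the entries $U(E_0,E_0)=\mu_1E_2$, $U(E_1,E_1)=-\mu_2E_2$, $U(E_1,E_2)=\tfrac{\mu_2}{2}E_1$ and $U(E_2,E_0)=\tfrac{\mu_1}{2}E_0$ contribute, since $U(E_2,E_2)=0$ and the $E_0E_1$ cross term drops out because $U(E_0,E_1)=0$ (consistently with $[E_0,E_1]=0$). Collecting terms yields
$$2U(A,\bar A)=\mu_1(x^0_zx^2_{\bar z}+x^0_{\bar z}x^2_z)e^{-\mu_1x^2}E_0+\mu_2(x^1_zx^2_{\bar z}+x^1_{\bar z}x^2_z)e^{-\mu_2x^2}E_1+\bigl(2\mu_1x^0_zx^0_{\bar z}e^{-2\mu_1x^2}-2\mu_2x^1_zx^1_{\bar z}e^{-2\mu_2x^2}\bigr)E_2.$$

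Finally, equating the two sides componentwise, cancelling the nowhere-vanishing factors $e^{-\mu_1x^2}$ and $e^{-\mu_2x^2}$ in the $E_0$ and $E_1$ components and dividing the $E_2$ component by $2$, one obtains exactly the three displayed equations, so the equivalence holds. There is no genuine obstacle here: the computation is purely mechanical, and the only points that require a little care are the chain-rule contributions produced by differentiating the exponential factors in $A$ and the cancellation of the overall factor $2$ against the $2$ appearing in $2U(A,\bar A)$. (One could equally well derive the same system from the combined equation \eqref{eq:int} together with the Maurer--Cartan equation \eqref{eq:m-c}, but working directly from \eqref{eq:harm3} is the most transparent route.)
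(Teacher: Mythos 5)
Your computation is correct and is exactly the route the paper intends: the paper simply states that the lemma "follows from the harmonic map equation \eqref{eq:harm3}", i.e.\ by substituting $A=x^0_ze^{-\mu_1x^2}E_0+x^1_ze^{-\mu_2x^2}E_1+x^2_zE_2$ and comparing $E_0,E_1,E_2$ components against $2U(A,\bar A)$ using the values of $U(E_i,E_j)$ from the preceding lemma. Your component-by-component expansion (including the chain-rule terms from the exponentials) reproduces the three displayed equations, so the proposal matches the paper's argument.
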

The exterior derivative $d$ is decomposed as
$$
d=\partial+\bar{\partial},\
\partial=\frac{\partial}{\partial z}dz,\
\bar{\partial}=\frac{\partial}{\partial {\bar z}}d{\bar z},
$$
with respect to the conformal structure of $\mathfrak{D}$. Let
$\omega^{0}=e^{-\mu_1x^2}x^0_zdz$, $\omega^{1}=e^{-\mu_2x^2}x^1_zdz$,
$\omega^{2}=x^2_zdz$. Then  by Lemma \ref{lem:harm},
the triplet $\{\omega^0,\omega^1,\omega^2\}$ of (1,0)-forms
satisfies the following differential system:
\begin{align}
\label{HME1} \bar{\partial}\omega^{i}&=\mu_{i+1}\overline{\omega^{i}}\wedge
\omega^2,\ i=0,1,\\
\label{HME2} \bar{\partial}\omega^2&=\mu_1\overline{\omega^0}\wedge\omega^0-\mu_2\overline{\omega^1}\wedge\omega^1.
\end{align}
\begin{proposition}
Let $\{\omega^0,\omega^1,\omega^2\}$ be a solution to
\eqref{HME1}-\eqref{HME2} on a simply connected domain
$\mathfrak{D}$. Then
$$
\varphi(z,\bar{z})=2\mathrm{Re}\int^{z}_{z_0} \left(e^{\mu_{1}x^{2}(z,\bar{z})}\cdot\omega^0,
 e^{\mu_{2}x^{2}(z,\bar{z})}\cdot \omega^{1},
\omega^{2}\right)
$$
is a harmonic map into $G(\mu_1,\mu_2)$.
\newline
Conversely, any harmonic map of $\mathfrak{D}$ into $G(\mu_1,\mu_2)$
can be represented in this form.
\end{proposition}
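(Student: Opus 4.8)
The plan is to follow the reconstruction scheme used in \cite{InoguchiSol} and \cite{Lee}: read the first-order system \eqref{HME1}--\eqref{HME2} as a chain of exactness statements on the simply connected domain $\mathfrak{D}$, integrate it one coordinate at a time, and then recognize the resulting map as harmonic via Lemma~\ref{lem:harm}. The one point that needs care is the \emph{order} of integration: the data are the forms $\omega^i$, but the formula in the statement already contains the function $x^2$, so one must first show that $x^2$ itself can be recovered from $\omega^2$ alone before it is legitimate to use it in the integrands of the other two components. After that the argument is routine, and the converse is the same computation run backwards.

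Step one: extract $x^2$. Since $\omega^2$ is a $(1,0)$-form on a one-dimensional complex domain, $\partial\omega^2=0$, so $d(\omega^2+\overline{\omega^2})=\bar\partial\omega^2+\overline{\bar\partial\omega^2}$. By \eqref{HME2} this equals $\mu_1(\overline{\omega^0}\wedge\omega^0+\omega^0\wedge\overline{\omega^0})-\mu_2(\overline{\omega^1}\wedge\omega^1+\omega^1\wedge\overline{\omega^1})$, which vanishes because $\omega^i\wedge\overline{\omega^i}=-\overline{\omega^i}\wedge\omega^i$. Hence $2\mathrm{Re}\,\omega^2$ is a closed real $1$-form, and on the simply connected $\mathfrak{D}$ the Poincar\'e lemma gives a well-defined real function $x^2(z,\bar z):=2\mathrm{Re}\int_{z_0}^{z}\omega^2$, unique up to an additive constant, with $\partial x^2=\omega^2$ and $\bar\partial x^2=\overline{\omega^2}$. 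This is the step where \eqref{HME2} is genuinely used, and it is the only subtle point in the proof.

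Step two: extract $x^0,x^1$ and assemble $\varphi$. With $x^2$ in hand, put $\eta^i:=e^{\mu_{i+1}x^2}\omega^i$ for $i=0,1$. Using $\bar\partial x^2=\overline{\omega^2}$ together with \eqref{HME1} one finds $\bar\partial\eta^i=\mu_{i+1}e^{\mu_{i+1}x^2}\bigl(\overline{\omega^2}\wedge\omega^i+\overline{\omega^i}\wedge\omega^2\bigr)$, whose bracket is purely imaginary for the same reason as above while $e^{\mu_{i+1}x^2}$ is real; hence $2\mathrm{Re}\,\eta^i$ is closed and $x^i(z,\bar z):=2\mathrm{Re}\int_{z_0}^{z}\eta^i$ is a well-defined real function with $\partial x^i=\eta^i$. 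The map $\varphi:=(x^0,x^1,x^2)$ is then exactly the one in the statement, and by construction $\omega^0=e^{-\mu_1x^2}x^0_z\,dz$, $\omega^1=e^{-\mu_2x^2}x^1_z\,dz$, $\omega^2=x^2_z\,dz$, i.e. the forms one would build from $\varphi$ as in the text reproduce the given $\omega^i$.

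Step three: harmonicity, and the converse. Comparing the coefficients of $dz\wedge d\bar z$ in $\bar\partial\eta^i=-x^i_{z\bar z}\,dz\wedge d\bar z$ with the expression for $\bar\partial\eta^i$ from Step two (after substituting $\omega^i=e^{-\mu_{i+1}x^2}x^i_z\,dz$, $\omega^2=x^2_z\,dz$) yields $x^i_{z\bar z}=\mu_{i+1}(x^i_{\bar z}x^2_z+x^i_zx^2_{\bar z})$ for $i=0,1$, and comparing coefficients in $\bar\partial\omega^2=-x^2_{z\bar z}\,dz\wedge d\bar z$ with \eqref{HME2} yields $x^2_{z\bar z}-\mu_1x^0_zx^0_{\bar z}e^{-2\mu_1x^2}+\mu_2x^1_zx^1_{\bar z}e^{-2\mu_2x^2}=0$; these are precisely the equations of Lemma~\ref{lem:harm}, so $\varphi$ is harmonic. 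For the converse, given a harmonic $\varphi=(x^0,x^1,x^2)\colon\mathfrak{D}\to G(\mu_1,\mu_2)$, define $\omega^i$ by those same formulas; Lemma~\ref{lem:harm} and the coefficient comparison run backwards show $\{\omega^0,\omega^1,\omega^2\}$ solves \eqref{HME1}--\eqref{HME2}, while $2\mathrm{Re}\int_{z_0}^{z}(e^{\mu_1x^2}\omega^0,e^{\mu_2x^2}\omega^1,\omega^2)=2\mathrm{Re}\int_{z_0}^{z}(\partial x^0,\partial x^1,\partial x^2)=\varphi(z)-\varphi(z_0)$, so $\varphi$ has the asserted form up to the choice of base point $z_0$ (equivalently, up to a left translation of $G(\mu_1,\mu_2)$, which is an isometry).
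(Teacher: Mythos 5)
Your proof is correct, and it is essentially the argument the paper intends but omits (the Proposition is stated without proof, by analogy with \cite{InoguchiSol} and \cite{Lee}): one uses \eqref{HME2} and \eqref{HME1} to see that $2\mathrm{Re}\,\omega^2$ and then $2\mathrm{Re}\,(e^{\mu_{i+1}x^2}\omega^i)$ are closed, integrates by the Poincar\'e lemma on the simply connected $\mathfrak{D}$, and identifies the resulting equations with those of Lemma~\ref{lem:harm}, the converse being the same computation reversed. Your attention to recovering $x^2$ first and the remark that the converse holds up to the base point, i.e.\ up to a left translation (an isometry), are correct refinements of the statement as written.
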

\begin{corollary}
Let $\{\omega^0,\omega^1,\omega^2\}$ be a solution to \eqref{HME1}-\eqref{HME2} along with
\begin{equation}
\label{eq:conf}
-\omega^0\otimes\omega^0+\omega^1\otimes\omega^1+\omega^2\otimes\omega^2=0
\end{equation}
on a simply connected domain $\mathfrak{D}$. Then
$$
\varphi(z,\bar{z})=2\mathrm{Re}\int^{z}_{z_0} \left(e^{\mu_{1}x^{2}(z,\bar{z})}\cdot\omega^0,
 e^{\mu_{2}x^{2}(z,\bar{z})}\cdot \omega^{1},
\omega^{2}\right)
$$
is a weakly conformal harmonic map into $G(\mu_1,\mu_2)$.
Moreover $\varphi(z,\bar{z})$ is a maximal spacelike surface\footnote{From here on we mean a surface by an immersion.} if
$$
-\omega^{0}\otimes \overline{\omega^{0}}+
\omega^{1}\otimes \overline{\omega^{1}}+
\omega^{2}\otimes \overline{\omega^{2}}\not=0.
$$
\end{corollary}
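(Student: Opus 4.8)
The plan is to build on the preceding Proposition: since $\{\omega^0,\omega^1,\omega^2\}$ is assumed to solve \eqref{HME1}--\eqref{HME2}, that Proposition already yields the harmonicity of $\varphi$, so only weak conformality (under \eqref{eq:conf}) and, under the extra nondegeneracy hypothesis, the immersion and maximality properties remain to be checked. First I would pull the metric back along $\varphi=(x^0,x^1,x^2)$. Since the $x^i$ are real-valued, $\overline{\omega^0}=e^{-\mu_1x^2}x^0_{\bar z}\,d\bar z$ and similarly for $\omega^1,\omega^2$, so
\[
e^{-\mu_1x^2}\,dx^0=\omega^0+\overline{\omega^0},\qquad e^{-\mu_2x^2}\,dx^1=\omega^1+\overline{\omega^1},\qquad dx^2=\omega^2+\overline{\omega^2}.
\]
Substituting into $g_{(\mu_1,\mu_2)}=-e^{-2\mu_1x^2}(dx^0)^2+e^{-2\mu_2x^2}(dx^1)^2+(dx^2)^2$ and expanding the squares, the $(2,0)$-part of $\varphi^*g_{(\mu_1,\mu_2)}$ is exactly $-\omega^0\otimes\omega^0+\omega^1\otimes\omega^1+\omega^2\otimes\omega^2$, the $(0,2)$-part is its complex conjugate, and the $(1,1)$-part is the symmetrization of $-\omega^0\otimes\overline{\omega^0}+\omega^1\otimes\overline{\omega^1}+\omega^2\otimes\overline{\omega^2}$.

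The first assertion then follows at once: under \eqref{eq:conf} the $(2,0)$-part, and hence its conjugate the $(0,2)$-part, of $\varphi^*g_{(\mu_1,\mu_2)}$ vanishes, so $\varphi^*g_{(\mu_1,\mu_2)}$ is purely of type $(1,1)$, which is precisely the statement that $\varphi$ is weakly conformal; together with harmonicity this is a weakly conformal harmonic map. Writing $\omega^i=f^i\,dz$ and $z=u+iv$, this reads $\varphi^*g_{(\mu_1,\mu_2)}=2\lambda\,(du^2+dv^2)$ with $\lambda:=-|f^0|^2+|f^1|^2+|f^2|^2$.

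For the ``moreover'' clause I would use the quadratic relation $(f^0)^2=(f^1)^2+(f^2)^2$ contained in \eqref{eq:conf}: the triangle inequality gives $|f^0|^2=\bigl|(f^1)^2+(f^2)^2\bigr|\le|f^1|^2+|f^2|^2$, hence $\lambda\ge0$ everywhere on $\mathfrak{D}$. On the other hand $-\omega^0\otimes\overline{\omega^0}+\omega^1\otimes\overline{\omega^1}+\omega^2\otimes\overline{\omega^2}=\lambda\,dz\otimes d\bar z$, so the displayed nondegeneracy hypothesis says exactly $\lambda\ne0$, and together with $\lambda\ge0$ this forces $\lambda>0$. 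Hence $\varphi^*g_{(\mu_1,\mu_2)}=2\lambda(du^2+dv^2)$ is positive definite and $\varphi$ is a spacelike immersion; since in real dimension two the harmonic map equation is conformally invariant, a conformal immersion is harmonic exactly when it has vanishing mean curvature, so $\varphi$ is a maximal spacelike surface.

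The tensor bookkeeping in the pullback step is routine. The one point that genuinely needs care is the sign analysis at the end: the quadratic conformality relation forces $\lambda\ge0$, and the nondegeneracy hypothesis then upgrades this to $\lambda>0$, which is exactly what makes the induced metric Riemannian (spacelike) rather than merely nondegenerate. This interaction between the two displayed conditions is the heart of the ``moreover'' clause.
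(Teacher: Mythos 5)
Your proof is correct and takes essentially the route the paper intends: harmonicity comes from the preceding Proposition, weak conformality from the vanishing of the $(2,0)$-part of the pullback metric, and the key observation that the quadratic relation forces $|f^0|^2\le|f^1|^2+|f^2|^2$ (so the nondegeneracy hypothesis upgrades $\lambda\ge 0$ to $\lambda>0$) makes the induced metric positive definite, whence the conformal harmonic immersion is a spacelike maximal surface. The paper gives no proof of this Corollary at all, so your argument simply supplies, correctly, the standard reasoning it leaves implicit.
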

\section{The normal Gau{\ss} map}

Let $\varphi: \mathfrak{D}\longrightarrow G(\mu_1,\mu_2)$ be a
conformal surface. Take the future-pointing unit normal $N$ along
$\varphi$. Then, by the left translation we obtain the following
smooth map:
$$\varphi^{-1}\cdot N: \mathfrak{D}\longrightarrow {\mathbb H}^{2}(-1),$$
where
\begin{align*}
{\mathbb H}^2(-1)&=\{u^0E_0+u^1E_1+u^2E_2:
-(u^0)^2+(u^1)^2+(u^2)^2=-1,\ u^0>0\}\\
                 &\subset \mathfrak{g}(\mu_1,\mu_2)
\end{align*}
is the unit hyperbolic $2$-space. The Lie algebra
$\mathfrak{g}(\mu_1,\mu_2)$ is identified with Minkowski $3$-space
${\mathbb E}^3_1(u^0,u^1,u^2)$ via the orthonormal basis
$\{E_0,E_1,E_2\}$. The smooth map $\varphi^{-1}\cdot N$ is called
the {\it normal Gau{\ss} map} of $\varphi$.

Let $\varphi:\mathfrak{D}\to G(\mu_1,\mu_2)$ be a
maximal spacelike immersion of a simply connected Riemann surface $\mathfrak{D}$
determined by the data $(\omega^0,\omega^1,\omega^2)$. Write the data as $\omega^i=\psi^{i}dz,\ i=0,1,2$.
Then the induced metric $I$ of $\varphi$ is
\begin{equation}
\label{eq:fff}
\begin{aligned}
I&=2(-\omega^0\otimes\overline{\omega^0}+\omega^1\otimes\overline{\omega^1}+\omega^2\otimes\overline{\omega^2})\\
 &=2(-|\psi^0|^2+|\psi^1|^2+|\psi^2|^2)dzd{\bar z}.
\end{aligned}
\end{equation}
From the conformality condition \eqref{eq:conf},
\begin{equation}
 \label{eq:conf2}
 -(\psi^0)^2+(\psi^1)^2+(\psi^2)^2=0.
\end{equation}
Hence, we can introduce two complex valued functions $f$ and $g$ by
\begin{equation}
\label{eq:data}
f:=\psi^1-i\psi^2,\ g:=\frac{\psi^0}{\psi^1-i\psi^2}.
\end{equation}
Using these two functions, $\varphi$ can be written as
\begin{equation}
\label{eq:weierstrass} \varphi(z,\bar{z})=2\mathrm{Re}\int^{z}_{z_0}
\left(e^{\mu_1x^2}fg,\frac{1}{2}e^{\mu_2x^2}f(1+g^2),
\frac{i}{2}f(1-g^2)\right)dz.
\end{equation}
$\varphi^{-1}\varphi_z$ is given by
\begin{equation}
 \varphi^{-1}\varphi_z=fgE_0+\frac{1}{2}f(1+g^2)E_1+\frac{i}{2}f(1-g^2)E_3.
\end{equation}
So, the first fundamental form\footnote{It can be also obtained directly from \eqref{eq:fff}.} $I$ is given in terms of $f$ and $g$ by
\begin{equation}
\label{eq:fff2}
 \begin{aligned}
  I&=2\langle\varphi^{-1}\varphi_z,\varphi^{-1}\varphi_{\bar z}\rangle dzd\bar z\\
  &=|f|^2(1-|g|^2)^2dzd\bar z.
 \end{aligned}
\end{equation}

The normal Gau{\ss} map
is computed to be
$$
\varphi^{-1}\cdot N=\frac{1}{1-|g|^{2}}\left((1+|g|^2)E_{0}+
2\mathrm{Re}\>(g) E_{1}+ 2\mathrm{Im}\>(g)E_{2}\right).
$$
Let $\mathbb{D}=\{\zeta^1E_1+\zeta^2E_2\subset {\mathbb R}^2:
(\zeta^1)^2+(\zeta^2)^2<1\}$. Under the stereographic projection from $-E_0$
$$\wp^+: {\mathbb H}^{2}(-1)\longrightarrow\mathbb{D};\ \wp^+(u^0 E_0+u^1 E_1+u^2
E_2)=\frac{u^1}{1+u^0}E_1+\frac{u^2}{1+u^0}E_2,$$ the map
$\varphi^{-1}\cdot N$ is identified with the function $g$. If $\mathbb{H}^2(-1)$ is defined to be the hyperboloid of two sheets
$${\mathbb H}^2(-1)=\{u^0E_0+u^1E_1+u^2E_2:
-(u^0)^2+(u^1)^2+(u^2)^2=-1\},$$
then $\wp^+: {\mathbb H}^{2}(-1)\longrightarrow\hat{\mathbb{C}}$, where $\hat{\mathbb{C}}$ denotes the extended complex plane $\mathbb{C}\cup\{\infty\}$. The function $g$ is called the \emph{projected normal Gau{\ss} map} of $\varphi$. It follows from \eqref{HME1} and \eqref{HME2} that
\begin{align}
\label{HME3}
\psi^i_{\bar z}&=\mu_{i+1}\overline{\psi^i}\psi^2,\ i=0,1,\\
\label{HME4}
\psi^2_{\bar z}&=\mu_1|\psi^0|^2-\mu_2|\psi^1|^2.
\end{align}
Using \eqref{HME3} and \eqref{HME4}, we obtain
\begin{align}
\label{eq:data1}
\frac{\partial f}{\partial
\bar{z}}&=-i|f|^2\left\{\mu_1|g|^2-\frac{\mu_2}{2}(1+\bar g^2)\right\},\\
\label{eq:data2}
\frac{\partial g}{\partial \bar{z}}&=\frac{i}{2}\bar f\{\mu_1\bar g(1+g^2)-\mu_2g(1+\bar g^2)\}.
\end{align}
As is seen in Section \ref{sec:sol}, $G(0,0)={\mathbb E}^3_1$ and
$G(c,c)={\mathbb H}^3_1(-c^2)_+$ are the only cases of solvable Lie
group $G(\mu_1,\mu_2)$ with constant sectional curvature. For $G(0,0)={\mathbb E}^3_1$,
$$\frac{\partial f}{\partial \bar z}=\frac{\partial g}{\partial \bar
z}=0,$$ that is, both $f$ and $g$ are holomorphic. From
\eqref{eq:weierstrass}, we retrieve the Weierstra{\ss}
representation formula for maximal spacelike surface $\varphi:
\mathfrak{D}\longrightarrow{\mathbb E}^3_1$ given by
\begin{equation}
\label{eq:weierstrass2}
\varphi(z,\bar{z})=2\mathrm{Re}\int^{z}_{z_0} \left(fg,
\frac{1}{2}f(1+g^2), \frac{i}{2}f(1-g^2)\right)dz
\end{equation}
in terms of holomorphic data $(g,f)$. \eqref{eq:weierstrass2} was obtained independently by O. Kobayashi \cite{Kobayashi} and by L. McNertney \cite{Mc}. For $G(c,c)={\mathbb H}^3_1(-c^2)_+$,
\begin{align}
\frac{\partial f}{\partial\bar z}&=-ic|f|^2\left\{|g|^2-\frac{1}{2}(1+\bar g^2)\right\},\\
\label{eq:dsgauss}
\frac{\partial g}{\partial\bar
z}&=\frac{ic}{2}\bar f(\bar g-g)(1-|g|^2).
\end{align}
Then the Weierstra{\ss} representation formula \eqref{eq:weierstrass} with $\mu_1=\mu_2=c$ gives rise to maximal spacelike surfaces in ${\mathbb H}^3_1(-c^2)_+$. If $g$ is holomorphic, it follows from \eqref{eq:dsgauss} that $g=\bar g$ or $|g|^2=1$. If $|g|^2=1$ then we see from \eqref{eq:fff2} that $I=0$. If $g=\bar g$ then $g$ is real. This means that $\psi^2=0$ (see \eqref{eq:data}) and from the conformality condition \eqref{eq:conf2} we get $(\psi^0)^2=(\psi^1)^2$. But along with $\psi^2=0$ this also leads to $I=0$. Hence the projected normal Gau{\ss} map of maximal spacelike surfaces in ${\mathbb H}^3_1(-c^2)_+$ cannot be holomorphic.

It follows from \eqref{eq:data1} and \eqref{eq:data2} that the projected normal Gau{\ss} map $g$ satisfies the partial differential equation:
\begin{equation}
\label{eq:harm4}
\begin{aligned}
g_{z\bar z}&-\frac{(\mu_1^2-\mu_2^2)g(1+g^2)(1-\bar g^2)|g_{\bar z}|^2}{[\mu_1g(1+\bar g^2)-\mu_2\bar g(1+g^2)][\mu_1\bar g(1+g^2)-\mu_2g(1+\bar g^2)]}\\
&-\frac{2\mu_1|g|^2-\mu_2(1+\bar g^2)}{\mu_1\bar g(1+g^2)-\mu_2g(1+\bar g^2)}g_zg_{\bar z}=0.
           \end{aligned}
           \end{equation}
The equation \eqref{eq:harm4} is not the harmonic map equation for the projected normal Gau{\ss} map $g$ in general. The following theorem tells under what conditions it becomes the harmonic map equation for $g$.
\begin{theorem}
\label{thm:harm}
The partial differential equation \eqref{eq:harm4} is the harmonic map
equation for $g$ if and only if $\mu_1^2=\mu_2^2$. If $\mu_1=\mu_2\ne 0$, then \eqref{eq:harm4} is simplified to
\begin{equation}
\label{eq:harm5}
g_{z\bar z}+\frac{1+\bar g^2-2|g|^2}{(\bar g-g)(1-|g|^2)}g_zg_{\bar
z}=0.
\end{equation}
This equation is the harmonic map equation for a map $g:
\mathfrak{D}(z,\bar z)\longrightarrow\left(\hat{\mathbb C}(w,\bar
w),\frac{2dwd\bar w}{|(\bar w-w)(1-|w|^2)|}\right)$. If $\mu_1=-\mu_2$, then \eqref{eq:harm4} is simplified to
\begin{equation}
\label{eq:harm6}
g_{z\bar z}-\frac{1+\bar g^2+2|g|^2}{(g+\bar g)(1+|g|^2)}g_zg_{\bar z}=0.
\end{equation}
This equation is the harmonic map equation for a map $g:
\mathfrak{D}(z,\bar z)\longrightarrow\left(\hat{\mathbb C}(w,\bar
w),\frac{2dwd\bar w}{|(w+\bar w)(1+|w|^2)|}\right)$.
\end{theorem}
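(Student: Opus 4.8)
The plan is to read \eqref{eq:harm4} against the standard shape of a harmonic map equation into a Riemann surface, and then to handle the two surviving cases by direct reduction. Recall that for a target $(\hat{\mathbb C}(w,\bar w),\lambda\,dw\,d\bar w)$ with conformal factor $\lambda=\lambda(w,\bar w)>0$, a map $g:\mathfrak D(z,\bar z)\to\hat{\mathbb C}$ is harmonic exactly when
$$
g_{z\bar z}+(\log\lambda)_w\,g_z g_{\bar z}=0 .
$$
Thus a harmonic map equation is linear in $g_{z\bar z}$, its sole nonlinear term is a multiple of $g_z g_{\bar z}$, and that multiple is a function of $(g,\bar g)$ alone. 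Equation \eqref{eq:harm4} has exactly this form save for the extra term carrying $|g_{\bar z}|^2=g_{\bar z}\,\overline{g_{\bar z}}$, whose coefficient is a nonzero rational multiple of $(\mu_1^2-\mu_2^2)\,g(1+g^2)(1-\bar g^2)$. Since $|g_{\bar z}|^2$ cannot be matched against $g_{z\bar z}$, against $g_z g_{\bar z}$, or against a function of $(g,\bar g)$, equation \eqref{eq:harm4} can coincide with a harmonic map equation only if that coefficient vanishes identically, i.e. only if $\mu_1^2=\mu_2^2$. This settles the ``only if'' assertion.

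For the converse, assume $\mu_1^2=\mu_2^2$, so the $|g_{\bar z}|^2$ term drops out of \eqref{eq:harm4}, and split into two cases. If $\mu_1=\mu_2=c$, I would substitute into the factor $\mu_1 g(1+\bar g^2)-\mu_2\bar g(1+g^2)$ and factor it as $c(g-\bar g)(1-|g|^2)$, with conjugate $c(\bar g-g)(1-|g|^2)$, and simplify $2\mu_1|g|^2-\mu_2(1+\bar g^2)=-c(1+\bar g^2-2|g|^2)$; inserting these into \eqref{eq:harm4} collapses it to \eqref{eq:harm5}. If $\mu_1=-\mu_2=c$, the same factor becomes the real quantity $c(g+\bar g)(1+|g|^2)$ while $2\mu_1|g|^2-\mu_2(1+\bar g^2)=c(1+\bar g^2+2|g|^2)$, and \eqref{eq:harm4} collapses to \eqref{eq:harm6}. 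These reductions are elementary; the only care needed is keeping track of the signs coming from $g-\bar g$ versus $\bar g-g$.

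It then remains to identify \eqref{eq:harm5} and \eqref{eq:harm6} as genuine harmonic map equations, which amounts to exhibiting $\lambda$ with $(\log\lambda)_w$ equal to the coefficient of $g_z g_{\bar z}$ (the constant $2$ is irrelevant here). For \eqref{eq:harm5} take $\lambda=2/|(\bar w-w)(1-|w|^2)|$ and put $F=(\bar w-w)(1-w\bar w)$; then $F_w=-(1+\bar w^2-2|w|^2)$ and $\partial_w\bar F=-F_w$, so $(\log|F|)_w=\tfrac12\big(F_w/F+\partial_w\bar F/\bar F\big)=\dfrac{1+\bar w^2-2|w|^2}{(w-\bar w)(1-w\bar w)}$, whence $(\log\lambda)_w=-(\log|F|)_w$ is precisely the coefficient appearing in \eqref{eq:harm5}. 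For \eqref{eq:harm6} take $\lambda=2/|(w+\bar w)(1+|w|^2)|$; here $F=(w+\bar w)(1+w\bar w)$ is real with $F_w=1+\bar w^2+2|w|^2$, so $(\log\lambda)_w=-F_w/F$ is precisely the coefficient in \eqref{eq:harm6}. (In the degenerate case $\mu_1=\mu_2=0$, equation \eqref{eq:harm4} reduces to $g_{z\bar z}=0$, recovering the classical fact that the normal Gau{\ss} map of a maximal surface in $\mathbb{E}^3_1$ is holomorphic.) The argument has no deep step; the real work, and the only place to stumble, is the bookkeeping of signs and of the $\bar g$-versus-$g$ asymmetry through the two substitutions and the two logarithmic-derivative computations.
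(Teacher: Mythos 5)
Your proof is correct and follows essentially the same route as the paper: both arguments compare \eqref{eq:harm4} with the tension-field form $g_{z\bar z}+\Gamma^w_{ww}\,g_zg_{\bar z}=0$ of the harmonic map equation into a conformal target, conclude that the $|g_{\bar z}|^2$ term forces $\mu_1^2=\mu_2^2$, and then match the coefficient of $g_zg_{\bar z}$ with $\Gamma^w_{ww}=(\log\lambda)_w$ for the stated metrics. The only cosmetic difference is that you verify the given conformal factors directly (and write out the algebraic reduction to \eqref{eq:harm5}--\eqref{eq:harm6}), whereas the paper presents the metrics as solutions of the equation $\Gamma^w_{ww}=\cdots$; the content is the same.
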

\begin{proof}
The \emph{tension field} $\tau(g)$ of $g$ is given by (\cite{E-L}, \cite{Wood})
\begin{equation}
\label{eq:tension} \tau(g)=4\lambda^{-2}(g_{z\bar
z}+\Gamma^w_{ww}g_zg_{\bar z}),
\end{equation}
where $\lambda$ is a parameter of conformality. Here,
$\Gamma^w_{ww}$ denotes the Christoffel symbols of $\hat{\mathbb
C}(w,\bar w)$. Comparing the equations \eqref{eq:harm4} and
$\tau(g)=0$, we see that \eqref{eq:harm4} is a harmonic
map equation if and only if $\mu_1^2=\mu_2^2$. In order to find a suitable metric on $\hat{\mathbb C}(w,\bar w)$ with which \eqref{eq:harm4} is a harmonic map equation, one simply needs to solve the first order partial differential equations
$$
\Gamma^w_{ww}=\left\{\begin{aligned}
 \frac{1+\bar w^2-2|w|^2}{(\bar w-w)(1-|w|^2)}\ \mbox{if}\ &\mu_1=\mu_2\ne 0,\\
 \\
-\frac{1+\bar w^2+2|w|^2}{(w+\bar w)(1+|w|^2)}\ \mbox{if}\ &\mu_1=-\mu_2.
\end{aligned}\right.$$
The solutions are
$$(g_{w\bar w})=\left\{\begin{aligned}&\begin{pmatrix}0 & \frac{1}{(\bar w-w)(1-|w|^2)}\\
\frac{1}{(\bar w-w)(1-|w|^2)} & 0
\end{pmatrix} &{\rm if}\ \mu_1&=\mu_2\ne 0,\\
\\
&\begin{pmatrix}0 & \frac{1}{(w+\bar w)(1+|w|^2)}\\
\frac{1}{(w+\bar w)(1+|w|^2)} & 0
\end{pmatrix} &{\rm if}\ \mu_1&=-\mu_2,
\end{aligned}\right.
$$
respectively.
\end{proof}
\begin{remark}
It is well-known that the projected Gau{\ss} map $g$ of a maximal spacelike
surface in $G(0,0)={\mathbb E}^3_1$ satisfies the
Laplace-Beltrami equation
$$\triangle g=4\lambda^{-2}g_{z\bar z}=0.$$
\end{remark}
\begin{remark}
Theorem \ref{thm:harm} tells us that Minkowski $3$-space
$G(0,0)={\mathbb E}^3_1$, anti-de Sitter $3$-space $G(c,c)=\mathbb{H}^3_1(-c^2)_+$, and $G(c,-c)$ are the only homogeneous $3$-spacetimes among $G(\mu_1,\mu_2)$ in which the projected normal Gau{\ss} map of a maximal spacelike surface is harmonic.
\end{remark}

{\sc Department of Mathematics,
University of Southern Mississippi,
Southern Hall, Box 5045,
Hattiesburg, MS39406-5045
U.S.A.}

\smallskip

{\it E-mail address}: {\tt sunglee@usm.edu}

\end{document}